\begin{document}
\setcounter{page}{1}
\title{
 On local quasi efficient solutions for nonsmooth vector optimization
}

\author{Mohsine Jennane$^1$ \and  Lhoussain El Fadil$^1$ \and El Mostafa Kalmoun$^2$}

\date{%
$^1$ Department of Mathematics, Faculty of Sciences Dhar-mahraz\\ Sidi Mohammed Ben Abdellah University, Fez, Morocco\\
E-mail: mohsine.jennane@usmba.ac.ma; lhouelfadil2@gmail.com\\[1ex]
$^2$ Department of Mathematics, Statistics and Physics\\
College of Arts and Sciences, Qatar University, Doha, Qatar\\
E-mail:ekalmoun@qu.edu.qa
}

\maketitle

\begin{abstract}
We are interested in local quasi efficient solutions for nonsmooth vector optimization problems under new generalized approximate invexity assumptions. We formulate necessary and sufficient optimality conditions based on Stampacchia and Minty types of vector variational inequalities involving
	Clarke's generalized Jacobians. We also establish the relationship between local quasi weak efficient solutions and vector critical points.

\bigskip

\noindent {\em Key words:} Nonsmooth vector optimization; generalized approximate invexity; vector variational inequalities; quasi efficient solutions.
\\
{\em Mathematics Subject Classifications:} 49J52, 90C46, 58E35.
\end{abstract}

\newtheorem{theorem}{Theorem}[section]
\newtheorem{lemma}[theorem]{Lemma}
\newtheorem{Definition}[theorem]{Definition}
\newtheorem{Proposition}[theorem]{Proposition}
\newtheorem{Remark}[theorem]{Remark}
\newtheorem{Remarks}[theorem]{Remarks}
\newtheorem{corollary}[theorem]{Corollary}
\newtheorem{Example}[theorem]{Example}

\numberwithin{equation}{section}

\section{Introduction}

Throughout the paper, $\mathbb{R}^n$ and $\mathbb{R}^m$ are the n-dimensional and the m-dimensional Euclidean spaces respectively,
$X\subseteq \mathbb{R}^n$ is a nonempty open set and $f := (f_1,..., f_m):X \rightarrow \mathbb{R}^m$ is a vector-valued function. Assume $C\subseteq \mathbb{R}^m$ is a closed pointed convex cone with a nonempty interior. Partial ordering is defined on $X$ using $C$ as follows: $x \geq_C y$ (resp. $x >_C y$) if $x - y \in C$  (resp. $x - y \in int\, C$).

Let us consider the following vector optimization problem (VOP):
$$ \min_{x \in X} f (x).$$
Recall that $\xi\in X$ is an efficient solution of (VOP), if no other feasible vector
$x \in X$ satisfies $f (x) \leq_C f (\xi)$.  Suppose we are given a mapping $\eta :X \times X \rightarrow \mathbb{R}^n$ and a vector $e>_C0$. In this paper, we are interested in optimality conditions for a weaker type of solutions to (VOP); namely, local $(\eta,e)$-quasi efficient solutions.
\begin{Definition}
	We say that a feasible point $\xi\in X$ is
	a local $(\eta,e)$-quasi (weak) efficient solution of (VOP) if there is $r > 0$ such that there is no $x \in B(\xi; r)$ satisfying
	$f(x)+e \|\eta(x,\xi)\|  \: \leq_C \ (<_C) \; f (\xi).$
\end{Definition}

	Vector optimization problems have a number of important applications in applied science, engineering and economics; see for example~\cite{eichfelder2012vector} and references therein. A powerful tool to study their optimality conditions is through vector variational inequalities~\cite{ansari2018vector}, initiated by Giannessi \cite{Giannessi1980TheoryandApplications} to be vector extensions of Stampacchia variational inequalities~\cite{Stampacchia1960AS}. For differentiable and convex  multiobjective functions, Giannessi~\cite{Giannessi1998NTMP} used vector variational inequalities of Minty type~\cite{Minty1967BAM} to derive necessary and sufficient conditions for efficient solutions. Generalizations of these optimality conditions were given for different types of generalized convexity
	\cite{Yang2004JOTA,Gang2008CMA,Fang2009Opt,Homidan2010JOTA,Zafarani2012JGO} and generalized invexity \cite{Long2012Optim,Mishra2006NA,Yang2006Opt,Ansari2013Opt}. In~\cite{MishraUpadhyay2013Positivity}, relationships between quasi efficient points, solutions to Stampacchia vector variational inequalities and vector critical points were identified under approximate convexity assumptions. On the other hand, a new concept of approximate invexity was defined in \cite{NoorMishraMomani2005NAFJ} as an extension of approximate convexity \cite{NgaiLucThera2000JNCA}. Furthermore, four new classes of generalized convexity were introduced in \cite{BhatiaGupta2013OptimLett} as a generalization of the classical notions of pseudoconvexity and quasiconvexity.

	It is worth mentioning that in case of nonsmooth vector optimization, the appropriate tool to study optimality conditions is Clarke's generalized Jacobian~\cite{clarke1983optimization} when the multiobjective function is supposed to be locally Lipschitz. Recall that $f$ is locally Lipschitz if for any $x_0 \in X$ there are two positive reals $k$ and $r > 0$ with
	$$\| f (x) - f (y)\| \leq k \|x-y\|, \quad \forall x,y \in B(x_0, r).$$
	In this case, Clarke's generalized
	Jacobian~\cite{clarke1983optimization} of $f$ at $x \in X$ is the set of $m\times n$ matrices defined by
	\begin{equation}\label{eq:clarke}
	\partial f (x) = co\{\underset{i \rightarrow +\infty}{lim} Jf(x^{(i)}): x^{(i)}\rightarrow x, x^{(i)} \in S\},
	\end{equation}
	where $co$ indicates the convex hull, $Jf(x^{(i)})$ is the Jacobian of $f$ at $x^{(i)}$, and $S$
	is the differentiability set of $f$.
	
	In many previous works~\cite{Mishra2006NA,GuptaMishra2018Optimization,MishraUpadhyay2013Positivity,Gut2016OL},  Clarke's generalized Jacobian of $f$ at $x$ was defined to be the Cartesian product of its real-valued components Clarke's subdifferentials $\partial f_1 (x)\times ... \times \partial f_m (x)$.
	Using \eqref{eq:clarke} in vector optimizations problems seems to be a more natural extension of the real-valued case since $\partial f$ as defined above is not equal to this Cartesian product. Nevertheless, the inclusion
	$$\partial f (x)\subseteq \partial f_1 (x)\times ... \times \partial f_m (x)$$
	reveals that using $\partial f$ in the generalized convexity/invexity definitions and in the vector variational inequalities appear to be less restrictive than the Cartesian product.

	Our aim in this paper is to introduce new types of generalized
	approximate invexity and investigate their use in deriving optimality conditions for local $(\eta,e)$-quasi (weak) efficient solutions of (VOP).  In particular, we use both strong and weak forms of Stampacchia and Minty vector variational inequalities given in terms of Clarke's generalized Jacobian \eqref{eq:clarke}. Finally, we show also the relationship between local $(\eta,e)$-quasi weak efficient solutions and vector critical points.

\section{Generalized approximate invexity}
		From now onward, we suppose that 
	$f$ is locally Lipschitz. Let us present our extensions of the generalized approximate convexity concepts provided in~\cite{NgaiLucThera2000JNCA,BhatiaGupta2013OptimLett,noor2006JMAA}.
	\begin{Definition}  $f$ is said to be approximate $(\eta,e)$-invex at $x_0 \in X$ if there is $r > 0$
		such that for any  $x, y \in B(x_0,r)$,
		$$f(x)-f(y) \geq_C A_y \eta(x,y) - e \| \eta(x,y)\|, \; \forall A_y \in \partial f(y).$$
		$f$ is said to be approximate $(\eta,e)$-invex on $X$, if $f$ is approximate $(\eta,e)$-invex at each $x_0\in X$.
	\end{Definition}
	By taking $\eta(x,y)=x-y$, we deduce that approximate convexity \cite{NgaiLucThera2000JNCA} is a special case of approximate invexity. However,  the following counter-example shows the converse is generally not true.
	\begin{Example}
		Let $X=\mathbb{R}$, $C=\mathbb{R}^2_+$ and for $x,y\in \mathbb{R}$
		$$
		f(x)=(x,\varphi(x))^T \quad \text{ where }\quad
		\varphi(x)=\begin{cases}  4x-x^2, & x\geq 0 \\
		2x, & x<0;
		\end{cases}$$
		and
		$$ \eta(x,y)= -|x-y|.$$
		Clarke's generalized Jacobian of $f$ at $x$ is given by
		$$\partial f(x)= \begin{cases}
		\{(1,4-2x)^T\}, & x>0; \\
		\{(1,k)^T:k\in[2,4]\}, & x=0; \\
		\{(1,2)^T\}, & x<0.
		\end{cases}$$
		Let $x_0=0$, $e=(\varepsilon,\varepsilon)$ for an arbitrary real $\varepsilon >0$, and take $r=min(1,\frac{\varepsilon}{2}) >0$.
		For all $x,y \in B(x_0, r)$ and all $A_y \in \partial f(y)$, we have
		$$f(x)-f(y)=(x-y,\varphi(x)-\varphi(y))^T,$$
		where
		$$\varphi(x)-\varphi(y)=\begin{cases}
		(x-y)(4-x-y), & \mbox{ if } y>0,x>0;\\
		2x-4y+y^2, & \mbox{ if } y>0,x\leq0;\\
		4x-x^2-2y, & \mbox{ if } y<0,x\geq0;\\
		2(x-y), & \mbox{ if } y<0,x<0;\\
		4x-x^2, & \mbox{ if } y=0,x>0;\\
		2x, & \mbox{ if } y=0,x<0;
		\end{cases}$$
		and
		$$ A_y\eta(x,y) - e\|\eta(x,y)\|= ((-1-\varepsilon)|x-y|,\alpha(x,y))^T,$$
		where
		$$\alpha(x,y)=\begin{cases}
		|x-y|(2y-4-\varepsilon), & \mbox{ if } y>0,x>0; \\
		2x-4y+y^2+x(2-y)+(x-y)(\varepsilon-y), & \mbox{ if } y>0,x\leq0;\\
		(x-y)(2y-4-\varepsilon), & \mbox{ if } y<0,x\geq0;\\
		|x-y|(-2-\varepsilon), & \mbox{ if } y<0,x<0; \\
		|x|(-k-\varepsilon), & \mbox{ if } y=0.\\
		\end{cases}$$
		We can easily verify that $f(x)-f(y)\geq_C A_y\eta(x,y) - e\|\eta(x,y)\|.$
		Hence $f$ is approximate $(\eta,e)$-invex at $x_0=0$.\\
		However, $f$ is not approximate convex. Indeed, if we take $y < 0$ and
		$x = 0$, then for $0 < \varepsilon < 1$ the inequality of approximate convexity is not satisfied.
	\end{Example}
	\begin{Definition} Let $x_0 \in X$. The function $f$ is said to be
		\begin{itemize}
			\item
			 approximate pseudo $(\eta,e)$-invex of type I at $x_0$
			if there exists $r >0$ so that for any $x, y \in B(x_0, r)$,
			$$f(x)-f(y) <_C - e\|\eta(x,y)\| \quad \Rightarrow \quad A_y\eta(x,y) <_C 0, \;\forall A_y \in \partial f(y);$$
	\item
		approximate pseudo $(\eta,e)$-invex of type II at $x_0$
		if there exists $r >0$ so that for any $x, y \in B(x_0, r)$,
		$$f(x) - f(y) <_C 0  \quad \Rightarrow \quad A_y\eta(x,y) + e\|\eta(x,y)\| <_C 0,
		\; \forall A_y \in \partial f(y).$$
	\item
	    approximate quasi $(\eta,e)$-invex of type I at $x_0$
		if there exists $r >0$ so that for any $x, y \in B(x_0, r)$,
		$$\exists  A_y \in \partial f(y):\; A_y\eta(x,y) -e\|\eta(x,y)\| >_C 0   \quad \Rightarrow \quad  f(x)-f(y)>_C 0.$$
	\item
		approximate quasi $(\eta,e)$-invex of type II at $x_0$
		if there exists $r >0$ so that for any $x, y \in B(x_0, r)$,
		$$\exists  A_y \in \partial f(y):\; A_y \eta(x,y)  >_C 0  \quad \Rightarrow \quad
		f(x) >_C f(y)+e\|\eta(x,y)\|.$$
		\end{itemize}	
	\end{Definition}
	\begin{Remark}
		\begin{itemize}
			\item If $f$ is approximate pseudo (resp. quasi) $(\eta,e)$-invex of type II at $x_0\in X$, then $f$ is approximate pseudo (resp. quasi) $(\eta,e)$-invex of type I at $x_0$.
			\item It is easy to see that any approximate $(\eta,e)$-invex function at $x_0$ is approximate
			pseudo $(\eta,e)$-invex function of type I and approximate quasi $(\eta,e)$-invex function of type I at $x_0$.
			\item There is no relation between approximate
			pseudo $(\eta,e)$-invex functions of type II and approximate quasi $(\eta,e)$-invex functions of type II and approximate invex functions (see \cite{GuptaMishra2018Optimization}).
		\end{itemize}
	\end{Remark}

	\section{Sufficient conditions for local quasi efficient solutions}
	
	We first consider Stampacchia and Minty types of vector variational inequalities involving Clarke’s generalized Jacobians:\\
	(SVVI) Find  $\xi\in X$ for which there exists no $x \in X$ satisfying
	$$A_{\xi} \eta(x,\xi)  \leq_C 0,\quad \forall A_{\xi} \in \partial f(\xi).$$
	(MVVI) Find $\xi\in X$ for which there exists no $x \in X$ satisfying
	$$A_{x} \eta(x,\xi)  \leq_C 0,\quad \forall A_{x} \in \partial f(x).$$
	
	We present sufficient conditions for local $(\eta,e)$-quasi efficient solutions of (VOP) under approximate invexity assumptions.
	\begin{theorem}\label{thm1} Suppose $f$ is approximate $(\eta,e)$-invex at $\xi\in X$. If $\xi$ is a solution of (SVVI), then $\xi$ is also a local $(\eta,e)$-quasi efficient solution of (VOP).
	\end{theorem}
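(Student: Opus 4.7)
The plan is to argue by contradiction, using the approximate $(\eta,e)$-invexity of $f$ at $\xi$ as the bridge that turns a putative "bad" point in $B(\xi, r)$ into a violation of (SVVI).

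First I would let $r_1 > 0$ denote the radius guaranteed by the approximate $(\eta,e)$-invexity of $f$ at $\xi$, so that for every $x \in B(\xi, r_1)$ and every $A_{\xi} \in \partial f(\xi)$,
\[
f(x) - f(\xi) \;\geq_C\; A_{\xi}\,\eta(x,\xi) - e\,\|\eta(x,\xi)\|.
\]
Assume for contradiction that $\xi$ is not a local $(\eta,e)$-quasi efficient solution of (VOP). Applying the negation of the definition with the particular radius $r_1$, there must exist some $x \in B(\xi, r_1)$ for which
\[
f(x) + e\,\|\eta(x,\xi)\| \;\leq_C\; f(\xi),
\]
i.e.\ $f(\xi) - f(x) - e\,\|\eta(x,\xi)\| \in C$.

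Next I would combine the two relations. For any $A_{\xi} \in \partial f(\xi)$, the invexity inequality gives $f(x) - f(\xi) - A_{\xi}\eta(x,\xi) + e\,\|\eta(x,\xi)\| \in C$. Adding this to the displayed vector above and using that $C$ is closed under addition (it is a convex cone), the $f$-terms and the $e\,\|\eta(x,\xi)\|$ terms cancel, leaving $-A_{\xi}\eta(x,\xi) \in C$, that is $A_{\xi}\eta(x,\xi) \leq_C 0$. Since the choice of $A_{\xi} \in \partial f(\xi)$ was arbitrary, this exhibits a point $x \in X$ witnessing the statement negated by (SVVI), contradicting the hypothesis that $\xi$ solves (SVVI).

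The argument is essentially just a cone-algebra manipulation, and the only subtle step is matching the radii: the definition of local $(\eta,e)$-quasi efficiency quantifies over some $r$, while approximate invexity supplies its own $r_1$. The trick is to take the bad point $x$ inside $B(\xi, r_1)$ specifically, so that the invexity inequality is actually available at the chosen $(x,\xi)$. Once that alignment is made, the cancellation of the $e\,\|\eta(x,\xi)\|$ terms is automatic and the contradiction with (SVVI) is immediate; I do not foresee any genuinely difficult step.
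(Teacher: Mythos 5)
Your proof is correct and follows essentially the same contrapositive/contradiction argument as the paper: take a violating point inside the invexity ball, add the two cone inequalities so the $f$ and $e\|\eta(x,\xi)\|$ terms cancel, and conclude $A_{\xi}\eta(x,\xi)\leq_C 0$ for all $A_{\xi}\in\partial f(\xi)$, contradicting (SVVI). Your explicit matching of the quasi-efficiency radius to the invexity radius $r_1$ is a small point of care that the paper's proof glosses over, but it is the same argument.
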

	\begin{proof}
		Assume $\xi$ fails to be a local $(\eta,e)$-quasi efficient solution of (VOP). Hence for each
		$r> 0$ there is $x_0 \in B(\xi,r)$ satisfying
		\begin{equation}\label{eq:thm1}
		f(x_0)-f (\xi) \leq_C -e\|\eta (x_0,\xi)\|.
		\end{equation}
		Since $f$ is approximate $(\eta,e)$-invex at $\xi$, it follows that
		$$f(x_0)-f (\xi)\geq_C A_{\xi}\eta(x_0,\xi)  - e \|\eta (x_0,\xi)\|, \quad \forall   A_{\xi} \in \partial f(\xi).$$
		Using \eqref{eq:thm1}, we get
		$$ A_{\xi}\eta(x_0,\xi) \leq_C  0,\quad \forall A_{\xi} \in \partial f(\xi).$$
		This means $\xi$ does not solve (SVVI).
	\end{proof}

\begin{Remark}
	As the approximate invexity assumption is more general than approximate convexity, Theorem \ref{thm1} extends Theorem 3.1 in~\cite{MishraUpadhyay2013Positivity}.
	\end{Remark}
	
	The following theorem gives the conditions for a point to be a local $(\eta,e)$-quasi efficient solution of (VOP) in terms of (MVVI).
	
	\begin{theorem}\label{thm2} Suppose $-f$ is approximate $(\eta,e)$-invex  at $\xi\in X$ such that $\eta(x, \xi)+\eta(\xi, x)=0$ for all $x\in X$. If $\xi$ is a solution of (MVVI), then $\xi$ is also a local $(\eta,e)$-quasi efficient solution of (VOP).
	\end{theorem}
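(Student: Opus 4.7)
My plan mirrors the proof of Theorem~\ref{thm1}, arguing by contrapositive. Suppose that $\xi$ is not a local $(\eta,e)$-quasi efficient solution of (VOP). Then for each $r>0$, and in particular for any $r$ smaller than the radius supplied by the approximate $(\eta,e)$-invexity of $-f$ at $\xi$, there exists $x_0\in B(\xi,r)$ satisfying
$$f(x_0)-f(\xi) \;\leq_C\; -e\,\|\eta(x_0,\xi)\|. \qquad (\ast)$$
The goal is to exhibit this same $x_0$ as a point for which $A_{x_0}\eta(x_0,\xi)\leq_C 0$ for every $A_{x_0}\in\partial f(x_0)$, contradicting the fact that $\xi$ solves (MVVI).

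The main step is to apply the approximate $(\eta,e)$-invexity of $-f$ at $\xi$ with the specialization $(x,y)=(\xi,x_0)$. This uses the standard calculus identity $\partial(-f)(x_0)=-\partial f(x_0)$, which follows directly from \eqref{eq:clarke} because $J(-f)=-Jf$ and negation commutes with limits and with the convex-hull operation. Writing an arbitrary element of $\partial(-f)(x_0)$ as $-A_{x_0}$ with $A_{x_0}\in\partial f(x_0)$, the defining inequality reads
$$f(x_0)-f(\xi)\;\geq_C\;-A_{x_0}\,\eta(\xi,x_0) - e\,\|\eta(\xi,x_0)\|,\quad \forall A_{x_0}\in\partial f(x_0).$$
Invoking the skew-symmetry assumption $\eta(\xi,x_0)=-\eta(x_0,\xi)$ (and the resulting norm equality $\|\eta(\xi,x_0)\|=\|\eta(x_0,\xi)\|$), this rewrites as
$$f(x_0)-f(\xi)\;\geq_C\; A_{x_0}\,\eta(x_0,\xi) - e\,\|\eta(x_0,\xi)\|.$$

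Finally, combining the last inequality with $(\ast)$ using transitivity of $\geq_C$ on the pointed convex cone $C$, the two copies of $-e\,\|\eta(x_0,\xi)\|$ cancel and I am left with $A_{x_0}\eta(x_0,\xi)\leq_C 0$ for every $A_{x_0}\in\partial f(x_0)$. This is exactly the negation of $\xi$ solving (MVVI), with $x=x_0$ as witness, completing the contrapositive. The only place in this plan where care is genuinely required is the sign bookkeeping around $\partial(-f)=-\partial f$ together with the antisymmetry of $\eta$; once those are tracked, the conclusion drops out by transitivity exactly as in Theorem~\ref{thm1}.
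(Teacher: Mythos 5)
Your proposal is correct and follows essentially the same route as the paper's own proof: contrapositive, apply the approximate $(\eta,e)$-invexity of $-f$ at $\xi$ with the roles $(x,y)=(\xi,x_0)$, use $\partial(-f)(x_0)=-\partial f(x_0)$ together with $\eta(\xi,x_0)=-\eta(x_0,\xi)$, and combine with the quasi-efficiency violation to conclude $A_{x_0}\eta(x_0,\xi)\leq_C 0$ for all $A_{x_0}\in\partial f(x_0)$. Your explicit remark about choosing $r$ below the invexity radius is a small point of care that the paper leaves implicit.
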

	
	\begin{proof}
		Assume the vector $\xi$ fails to be a local $(\eta,e)$-quasi efficient solution of (VOP). Thus for each
		$r> 0$ there is $x_0 \in B(\xi,r)$ satisfying \eqref{eq:thm1}. Again the  approximate $(\eta,e)$-invexity of $-f$ at $\xi$ yields
		$$(-f)(\xi)- (-f)(x_0)\geq_C A_{x_0} \eta (\xi,x_0)  - e \|\eta (\xi,x_0)\|, \quad \forall A_{x_0}\in \partial (-f)(x_0).$$
		Therefore
		$$f(x_0)-f(\xi)\geq_C A_{x_0} \eta (\xi,x_0)  - e \|\eta (\xi,x_0)\|,\quad \forall A_{x_0}\in \partial (-f)(x_0).$$
		Using \eqref{eq:thm1} and taking into account the fact that $\partial (-f )(x_0) = -\partial f (x_0)$ and $\eta(x_0, \xi)= -\eta(\xi, x_0)$, we obtain
		\begin{align*} A_{x_0} \eta(x_0,\xi)= (-A_{x_0}) \eta(\xi,x_0)&\leq_C f(x_0)-f(\xi)+ e \|\eta (\xi,x_0)\|\\
		&=f(x_0)-f(\xi)+ e \|\eta (x_0,\xi)\|\\
		&\leq_C 0,
		\end{align*}
		for any $A_{x_0}\in \partial f(x_0)$. Hence $\xi$ does not solve (MVVI).
	\end{proof}
	Furthermore, we prove that every solution of (SVVI) is still a local $(\eta,e)$-quasi efficient solution of (VOP) in the case of approximate pseudo invexity of type II.
	\begin{theorem}\label{thm3} Assume that $f$ is approximate pseudo $(\eta,e)$-invex of type II at $\xi \in X$. If $\xi$ is a solution of (SVVI), then $\xi$ is also a local $(\eta,e)$-quasi efficient solution of (VOP).
	\end{theorem}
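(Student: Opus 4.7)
The plan is to argue by contradiction in the same fashion as Theorem~\ref{thm1}, only swapping the approximate invex inequality for the pseudo type II implication. Suppose $\xi$ is not a local $(\eta,e)$-quasi efficient solution of (VOP); then for every $r>0$ there exists $x_0\in B(\xi,r)$ with
\[
f(x_0)-f(\xi)\leq_C -e\,\|\eta(x_0,\xi)\|.
\]
I would shrink $r$ so that $B(\xi,r)$ lies inside the neighbourhood furnished by the pseudo $(\eta,e)$-invex of type II property at $\xi$; this makes the implication applicable to the pair $(x_0,\xi)$.

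To activate the hypothesis, I must first upgrade the relation above to the strict inequality $f(x_0)-f(\xi)<_C 0$ that pseudo type II requires. The degenerate case $\eta(x_0,\xi)=0$ is dismissed at once: then $A_\xi\,\eta(x_0,\xi)=0\leq_C 0$ for every $A_\xi\in\partial f(\xi)$, which already violates (SVVI). Otherwise $\|\eta(x_0,\xi)\|>0$ and, since $e>_C 0$, we have $-e\,\|\eta(x_0,\xi)\|<_C 0$; combined with $f(x_0)-f(\xi)\leq_C -e\,\|\eta(x_0,\xi)\|$ via the standard inclusion $C+int\,C\subseteq int\,C$, this yields $f(x_0)-f(\xi)<_C 0$. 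Invoking the pseudo type II property at $\xi$ then gives
\[
A_\xi\,\eta(x_0,\xi)+e\,\|\eta(x_0,\xi)\|<_C 0,\qquad \forall A_\xi\in\partial f(\xi).
\]

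Since $e\,\|\eta(x_0,\xi)\|\in C$, the same cone inclusion produces $A_\xi\,\eta(x_0,\xi)<_C 0$ for every $A_\xi\in\partial f(\xi)$; in particular $A_\xi\,\eta(x_0,\xi)\leq_C 0$ for all $A_\xi\in\partial f(\xi)$, contradicting $\xi$ being a solution of (SVVI). The main obstacle I anticipate is exactly this strictness upgrade: pseudo type II demands the strict antecedent $f(x_0)-f(\xi)<_C 0$, whereas the failure of quasi efficiency only supplies a $\leq_C$ relation, so one must exploit $e\in int\,C$ carefully and handle the trivial direction $\eta(x_0,\xi)=0$ separately; every other step is essentially a direct transcription of the argument used for Theorem~\ref{thm1}.
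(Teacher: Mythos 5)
Your proposal is correct and follows essentially the same contrapositive argument as the paper: use the failure of quasi efficiency to get the antecedent of the pseudo type II implication, then combine the resulting inequality with $e\|\eta(x_0,\xi)\|\in C$ to contradict (SVVI). The only difference is that you explicitly treat the degenerate case $\eta(x_0,\xi)=0$ (needed to upgrade $\leq_C$ to the strict antecedent $<_C 0$), a point the paper's proof silently assumes away when it writes $-e\|\eta(x_0,\xi)\|<_C 0$.
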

	
	\begin{proof}
	By contrapositive, assume that for each
		$r> 0$, there is $x_0 \in B(\xi,r)$ satisfying
		\begin{equation}\label{eq:thm3}
		f(x_0)-f (\xi) \leq_C -e \|\eta(x_0,\xi)\|<_C 0.
		\end{equation}
		Since $f$ is approximate pseudo $(\eta,e)$-invex of type II at $\xi$, it follows that
		$$ A_{\xi} \eta(x_0,\xi) <_C -e \|\eta(x_0,\xi)\|, \quad \forall A_{\xi} \in \partial f(\xi).$$
		Applying equation \eqref{eq:thm3}, we get
		$$A_{\xi} \eta(x_0,\xi) \leq_C 0,\quad\forall A_{\xi} \in \partial f(\xi).$$
		Therefore $\xi$ is not a solution of (SVVI).
	\end{proof}
	Using similar arguments as in the proof of Theorem \ref{thm2}, we can also demonstrate that under approximate pseudo $(\eta,e)$-invexity of type II of the function $-f$, solutions to (MVVI) are also local e-quasi efficient solutions of (VOP) provided that $\eta$ holds the same condition.
	
	\section{Sufficient and necessary conditions for local quasi weak efficient solutions}
	
	In this section, we consider the weak formulations of Stampacchia and Minty vector variational inequalities as follows:\\
	
	(WSVVI) Find  $\xi\in X$ for which there exists no $x \in X$ satisfying
	$$A_{\xi} \eta(x,\xi)  <_C 0,\quad \forall A_{\xi} \in \partial f(\xi).$$
	
	(WMVVI) Find $\xi\in X$ for which there exists no $x \in X$ satisfying
	$$A_{x} \eta(x,\xi)  <_C 0,\quad \forall A_{x} \in \partial f(x).$$
	
	First, let us note that applying similar arguments as in the previous section, we can show that if $f$ is approximate pseudo $(\eta,e)$-invex of type I at a solution $\xi \in X$ of (WSVVI), then $\xi$ is also a local $(\eta,e)$-quasi weak efficient solution of (VOP). The converse implication is provided by the following theorem.
	\begin{theorem} Suppose that $\eta$ be is affine in the first argument with $\eta(x, x)=0$ for all $x\in X$. Assume also that $-f$ is approximate quasi $(\eta,e)$-invex of type II at $\xi\in X$. If $\xi$ is a local $(\eta,e)$-quasi weak efficient solution of (VOP), then $\xi$ is a solution of (WSVVI).
	\end{theorem}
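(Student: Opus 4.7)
The plan is to argue by contrapositive: assume $\xi$ does not solve (WSVVI) and produce a point in every neighborhood of $\xi$ that violates the local $(\eta,e)$-quasi weak efficiency. So suppose there is $x^\ast \in X$ with $A_\xi \eta(x^\ast,\xi) <_C 0$ for all $A_\xi \in \partial f(\xi)$. The obstacle is that $x^\ast$ need not lie in the small ball on which the approximate invexity definition applies; this is where the hypotheses on $\eta$ come in.

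First I would exploit affineness in the first argument together with $\eta(x,x)=0$ to rescale. For $\lambda \in (0,1]$, set $x_\lambda := \xi + \lambda(x^\ast - \xi)$; affineness gives
\begin{equation*}
\eta(x_\lambda,\xi) = \lambda\,\eta(x^\ast,\xi) + (1-\lambda)\,\eta(\xi,\xi) = \lambda\,\eta(x^\ast,\xi).
\end{equation*}
Hence for every $A_\xi \in \partial f(\xi)$ one has $A_\xi \eta(x_\lambda,\xi) = \lambda A_\xi \eta(x^\ast,\xi) <_C 0$, since $C$ is a cone and $\lambda>0$.

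Next, let $r_1>0$ be the radius from the definition of approximate quasi $(\eta,e)$-invexity of type II of $-f$ at $\xi$, and $r_2>0$ the radius from local $(\eta,e)$-quasi weak efficiency. For $\lambda$ sufficiently small, $x_\lambda \in B(\xi, \min(r_1,r_2))$. I would then translate the condition on $\partial f(\xi)$ into one on $\partial(-f)(\xi) = -\partial f(\xi)$: picking any $A_\xi \in \partial f(\xi)$ and setting $B_\xi := -A_\xi \in \partial(-f)(\xi)$, one gets $B_\xi \eta(x_\lambda,\xi) >_C 0$, so the hypothesis of approximate quasi $(\eta,e)$-invexity of type II for $-f$ is met.

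Applying the implication then yields $(-f)(x_\lambda) >_C (-f)(\xi) + e\|\eta(x_\lambda,\xi)\|$, i.e.
\begin{equation*}
f(x_\lambda) - f(\xi) <_C -e\,\|\eta(x_\lambda,\xi)\|,
\end{equation*}
which contradicts the local $(\eta,e)$-quasi weak efficiency of $\xi$ since $x_\lambda \in B(\xi, r_2)$. I expect the only delicate bookkeeping to be the sign flip in passing from $\partial f(\xi)$ to $\partial(-f)(\xi)$ and the careful invocation of the existential quantifier in the type II definition; once the affine rescaling places us inside the invexity ball, everything else is straightforward.
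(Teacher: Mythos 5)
Your proof is correct and follows essentially the same route as the paper's: contrapositive, affine rescaling $x_\lambda=\xi+\lambda(x^\ast-\xi)$ with $\eta(\xi,\xi)=0$ to bring the violating point into the invexity and efficiency balls, the sign flip $\partial(-f)(\xi)=-\partial f(\xi)$ to trigger the type II quasi invexity of $-f$, and the resulting inequality $f(x_\lambda)-f(\xi)<_C-e\|\eta(x_\lambda,\xi)\|$ contradicting local quasi weak efficiency. The only cosmetic difference is that the paper phrases the final step over an arbitrary radius $r$ rather than fixing the efficiency radius $r_2$ and deriving a contradiction, which is logically equivalent.
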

	
	\begin{proof}
		Assume that $\xi$ is not a solution of (WSVVI). This means there is $x \in X$ such that for all $A_{\xi} \in \partial f(\xi)$, we have $A_{\xi} \eta(x,\xi)  <_C0.$ Hence
		\begin{equation}\label{eq:thm4}
		-A_{\xi} \eta(x,{\xi})  >_C  0.
		\end{equation}
		From $\partial (-f )(x) = -\partial f (x)$ we get $-A_{\xi} \in \partial (-f)({\xi}).$
		Since $-f$ is approximate quasi $(\eta,e)$-invex of type II at $\xi$, there is
		$\widetilde{r} > 0$ such that for any $x_0 \in B(\xi,\widetilde{r})$
		\begin{align}\label{eq:thm4-1} -A_{\xi} \eta(x_0,{\xi})  >_C 0 &\Rightarrow -f(x_0)-(-f({\xi}))>_C e\|\eta(x_0,\xi)\| \nonumber\\
		&\Rightarrow f(x_0)-f(\xi) <_C -e \|\eta(x_0,\xi)\| .
		\end{align}
		Let $r>0$ be arbitrary. We take $\overline{r}\leq min\{r,\widetilde{r}\}$ and $\lambda \in (0,1)$ so that $x_0=\lambda x+(1-\lambda)\xi \in B(\xi,\overline{r})\subseteq X$. We have
		$$A_{\xi} \eta(x_0,{\xi})=A_{\xi} \eta(\lambda x+(1-\lambda)\xi,{\xi})=\lambda A_{\xi} \eta(x,{\xi})$$ thanks to the assumptions on $\eta$.
		Thus, by using \eqref{eq:thm4}, we obtain $-A_{\xi} \eta(x_0,{\xi}) >_C  0$. Since $x_0\in B(\xi,\widetilde{r})$, then applying \eqref{eq:thm4-1}, the last inequality yields
		$$f(x_0)-f(\xi) <_C -e \|\eta(x_0,\xi)\| $$
		where $x_0\in B(\xi,r)$.
		Hence, $\xi$ cannot be an $(\eta,e)$-quasi weak efficient solution of (VOP).
	\end{proof}
	The following theorem illustrates when a solution of (WMVVI) is
	also a local $(\eta,e)$-quasi weak efficient solution of (VOP).
	\begin{theorem} Suppose that $-f$ is approximate pseudo $(\eta,e)$-invex of type I at $\xi$ with $\eta(x, \xi)+\eta(\xi, x)=0$ for all $x\in X$. If $\xi$ is a solution of (WMVVI), then $\xi$ is a local $(\eta,e)$-quasi weak efficient solution of (VOP).
	\end{theorem}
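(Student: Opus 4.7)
The plan is to argue by contrapositive, mirroring the structure of the proof of Theorem \ref{thm2}. Assume $\xi$ is not a local $(\eta,e)$-quasi weak efficient solution of (VOP). Let $r' > 0$ be the radius supplied by the approximate pseudo $(\eta,e)$-invexity of type I of $-f$ at $\xi$. Using the failure of local $(\eta,e)$-quasi weak efficiency with the specific choice $r = r'$, we obtain a point $x_0 \in B(\xi, r')$ such that
\begin{equation*}
    f(x_0) - f(\xi) <_C -e\,\|\eta(x_0,\xi)\|.
\end{equation*}

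Next, I would rewrite this inequality in a form suitable for applying the pseudo invexity of $-f$. Multiplying by $-1$ and using the assumption $\eta(\xi,x_0) = -\eta(x_0,\xi)$, which implies $\|\eta(\xi,x_0)\| = \|\eta(x_0,\xi)\|$, the inequality becomes
\begin{equation*}
    (-f)(\xi) - (-f)(x_0) <_C -e\,\|\eta(\xi,x_0)\|.
\end{equation*}
Since both $\xi$ and $x_0$ lie in $B(\xi, r')$, the definition of approximate pseudo $(\eta,e)$-invexity of type I of $-f$ at $\xi$, applied with the roles $x := \xi$ and $y := x_0$, yields
\begin{equation*}
    A_{x_0}\,\eta(\xi, x_0) <_C 0, \quad \forall\, A_{x_0} \in \partial(-f)(x_0).
\end{equation*}

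Finally, I would translate this back to $\partial f(x_0)$ using the two structural identities $\partial(-f)(x_0) = -\partial f(x_0)$ and $\eta(\xi, x_0) = -\eta(x_0,\xi)$. Writing an arbitrary element of $\partial(-f)(x_0)$ as $-B_{x_0}$ with $B_{x_0} \in \partial f(x_0)$, the previous display becomes $(-B_{x_0})(-\eta(x_0,\xi)) <_C 0$, that is,
\begin{equation*}
    B_{x_0}\,\eta(x_0,\xi) <_C 0, \quad \forall\, B_{x_0} \in \partial f(x_0),
\end{equation*}
which exhibits $x_0 \in X$ as a witness that $\xi$ does not solve (WMVVI), contradicting the hypothesis.

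I do not expect any serious obstacle: the argument is a close weak-inequality analogue of Theorem \ref{thm2}, with (MVVI) replaced by (WMVVI) and approximate invexity replaced by approximate pseudo invexity of type I. The only delicate point is bookkeeping, namely invoking the pseudo-invexity radius $r'$ before selecting $x_0$, and keeping track of the two sign flips (one from $-f$, one from $\eta(\xi,x_0)=-\eta(x_0,\xi)$) so that the final strict vector inequality lands correctly in the interior of $C$.
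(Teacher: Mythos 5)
Your proof is correct and follows essentially the same route as the paper's: negate local quasi weak efficiency, flip signs via $\eta(\xi,x_0)=-\eta(x_0,\xi)$, apply the type I pseudo invexity of $-f$ at $\xi$ with $x:=\xi$, $y:=x_0$, and translate back through $\partial(-f)(x_0)=-\partial f(x_0)$ to produce a witness against (WMVVI). If anything, your version is slightly cleaner: you invoke the invexity radius before selecting $x_0$ and you draw the correct type I conclusion $A_{x_0}\eta(\xi,x_0)<_C 0$ directly, whereas the paper writes the stronger bound $<_C -e\|\eta(\xi,x_0)\|$ at that step, which is what type II (not type I) would give.
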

	\begin{proof}
		Assume that for each
		$r> 0$ there is $x_0 \in B(\xi,r)$ satisfying
		$$f(x_0)-f (\xi) <_C -e \|\eta(x_0,\xi)\|.$$
		Since $\eta(x_0, \xi)=-\eta(\xi, x_0)$, we obtain
		\begin{align*}-f(\xi)-(-f )(x_0) <_C -e \|\eta(\xi,x_0)\|.
		\end{align*}
		As the function $-f$ is approximate pseudo $(\eta,e)$-invex of type I at $\xi$, it follows that
		$$A_{x_0} \eta(\xi,x_0) <_C -e \|\eta(\xi,x_0)\|, \quad \forall A_{x_0} \in \partial (-f)(x_0).$$
		Using $\partial (-f )(x_0) = -\partial f (x_0)$ and $\eta(x_0, \xi)= -\eta(\xi, x_0)$, we obtain
		$$ A_{x_0} \eta(x_0,\xi)= (-A_{x_0}) \eta(\xi,x_0)
		<_C -e \|\eta(\xi,x_0)\|
		<_C 0,
		$$
		for any $A_{x_0}\in \partial f(x_0)$. 
	\end{proof}
	The next result specifies that vector critical points represent sufficient optimality conditions. Let us first recall their definition.
	\begin{Definition} \cite{Gut2016OL} A vector critical point of $f$ is a feasible point $\xi\in X$ so that the system $\mu^T A_{\xi} = 0$ admits a solution $\mu>_C0$ for some $A_{\xi} \in \partial f(\xi) $.
	\end{Definition}
	The gist of the above definition is that a vector critical point $\xi$ means $0\in \mu^T \partial f(\xi)$ has at least a positive solution $\mu$. Note that in case of a scalar-valued objective function $f$, a critical point $\xi$ is a solution to the inclusion problem $0\in \partial f(\xi)$.
	
	\begin{theorem}\label{thm6} Assume $f$ is approximate pseudo $(\eta,e)$-invex of type I at $\xi \in X$. If $\xi$ is a vector critical point of $f$, then $\xi$ is a local $(\eta,e)$-quasi weak efficient solution of (VOP).
	\end{theorem}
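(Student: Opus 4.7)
The plan is to argue by contrapositive. Suppose $\xi$ is not a local $(\eta,e)$-quasi weak efficient solution of (VOP). Let $r>0$ be the radius furnished by the approximate pseudo $(\eta,e)$-invexity of type I of $f$ at $\xi$. By the negation of the definition applied with this particular $r$, there exists $x_0\in B(\xi,r)$ satisfying
\[
f(x_0)-f(\xi)\;<_C\; -e\|\eta(x_0,\xi)\|.
\]
The approximate pseudo $(\eta,e)$-invexity of type I at $\xi$ then yields
\[
A_\xi\,\eta(x_0,\xi)\;<_C\;0,\qquad \forall A_\xi\in\partial f(\xi).
\]

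Now invoke the vector-critical-point hypothesis: there exist $\mu>_C 0$ and some $\widetilde A_\xi\in\partial f(\xi)$ with $\mu^T \widetilde A_\xi=0$. Applying the previous display to this specific $\widetilde A_\xi$ gives $\widetilde A_\xi\,\eta(x_0,\xi)\in -\mathrm{int}\,C$. Since $\mu\in\mathrm{int}\,C$ and $-\widetilde A_\xi\,\eta(x_0,\xi)\in\mathrm{int}\,C$, the standard pairing between the cone and its dual (used throughout the paper, with $C=\mathbb{R}^m_+$ as the canonical case) gives
\[
\mu^T\bigl(\widetilde A_\xi\,\eta(x_0,\xi)\bigr) \;<\; 0.
\]
On the other hand, by associativity of the matrix-vector product with the scalar product,
\[
\mu^T\bigl(\widetilde A_\xi\,\eta(x_0,\xi)\bigr)=\bigl(\mu^T \widetilde A_\xi\bigr)\,\eta(x_0,\xi)=0,
\]
which contradicts the previous strict inequality. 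Hence $\xi$ must be a local $(\eta,e)$-quasi weak efficient solution of (VOP).

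The proof is structurally short and consists of three ingredients that must be lined up cleanly: the contrapositive reformulation of the weak efficiency definition, the implication supplied by pseudo-invexity of type I with the right choice of $r$, and the cancellation produced by pairing the critical-point relation $\mu^T \widetilde A_\xi=0$ against $\eta(x_0,\xi)$. The only delicate step, and the one I would spell out most carefully, is the passage from the cone inequality $\widetilde A_\xi\,\eta(x_0,\xi)<_C 0$ to the scalar inequality $\mu^T \widetilde A_\xi\,\eta(x_0,\xi)<0$; this is where the assumption $\mu>_C 0$ is essential and where the compatibility between $C$ and its dual (implicit in the paper's use of $\mu^T(\cdot)$ as a positive functional) does the real work.
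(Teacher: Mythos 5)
Your proof is correct and reaches the same key intermediate step as the paper: negate local quasi weak efficiency, pick $x_0$ in the ball given by the invexity radius, and use approximate pseudo $(\eta,e)$-invexity of type I to conclude $A_\xi\,\eta(x_0,\xi)<_C 0$ for all $A_\xi\in\partial f(\xi)$. Where you diverge is the final step: the paper invokes Gordan's theorem of the alternative (stated as a lemma from Bazaraa--Sherali--Shetty) to rule out the existence of $\mu>_C 0$ with $\mu^T A_\xi=0$, whereas you prove the needed direction of that alternative directly, by pairing $\mu$ against $\widetilde A_\xi\,\eta(x_0,\xi)\in-\mathrm{int}\,C$ and contradicting $\mu^T\widetilde A_\xi=0$. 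Your route is more self-contained and makes visible exactly where the critical-point relation is used; the paper's route outsources that computation to a standard lemma. The one point to flag is the step you yourself identify as delicate: $\mu>_C 0$ means $\mu\in\mathrm{int}\,C$, and for a general closed pointed convex cone $C$ this does \emph{not} imply $\mu^T v<0$ for all $v\in-\mathrm{int}\,C$ --- that implication needs $\mu$ to lie in the (interior of the) dual cone $C^*$, which follows from $\mu\in\mathrm{int}\,C$ only when $C\subseteq C^*$, e.g.\ for $C=\mathbb{R}^m_+$. Since the paper's version of Gordan's lemma is likewise the classical orthant statement, your proof is no less general than the paper's; but if you want the theorem for arbitrary $C$ you should either assume $C\subseteq C^*$ or restate the critical-point condition with $\mu$ in the dual cone.
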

	
	To prove this result, we need to use the following theorem of the alternative.
	\begin{lemma} \cite{Bazaraa2006wiley} (Gordan's Theorem) If $A$ is a $n\times m$ matrix, then we have either
		\begin{enumerate}
			\item $Ax<_C0\quad$ for some $x \in \mathbb{R}^m$; or
			\item $A^Ty=0,\; y\geq_C 0\quad$ for some nonzero solution $y\in \mathbb{R}^n$;
		\end{enumerate}
	but not both.
	\end{lemma}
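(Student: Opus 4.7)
The plan is to prove Gordan's Theorem by the standard two-step strategy for any alternative theorem: first rule out simultaneous validity of (1) and (2) by a direct computation, then show that negation of (1) forces (2) via a convex separation argument. Throughout I will work in the standard setting of \cite{Bazaraa2006wiley}, where $C$ is the nonnegative orthant, so that $<_C 0$ means strict componentwise negativity and $\geq_C 0$ means componentwise nonnegativity.

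For the ``not both'' direction, suppose there exist $x \in \mathbb{R}^m$ with $Ax <_C 0$ and a nonzero $y \in \mathbb{R}^n$ with $A^T y = 0$, $y \geq_C 0$. Compute the scalar $y^T(Ax)$ in two ways: viewing it as $(A^T y)^T x$ and using $A^T y = 0$ gives $y^T(Ax) = 0$. On the other hand, every coordinate of $Ax$ is strictly negative while $y$ has nonnegative coordinates and at least one strictly positive coordinate, so $y^T(Ax) < 0$. This contradiction rules out the simultaneous validity.

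For the ``at least one'' direction, suppose (1) fails. Consider the linear subspace $L = \{Ax : x \in \mathbb{R}^m\} \subseteq \mathbb{R}^n$; by hypothesis $L$ is disjoint from the nonempty open convex set $-\mathrm{int}\,C$. Apply a finite-dimensional separation theorem to obtain a nonzero $z \in \mathbb{R}^n$ with $z^T u \leq z^T v$ for all $u \in L$ and $v \in -\mathrm{int}\,C$. Because $L$ is a subspace, the left-hand supremum is finite only if $z \perp L$, so $z^T (Ax) = 0$ for every $x$, which is $A^T z = 0$. From $0 = \sup_{u \in L} z^T u \leq \inf_{v \in -\mathrm{int}\,C} z^T v$, passing to the closure yields $z^T v \geq 0$ for all $v \in -C$, equivalently $z^T c \leq 0$ for all $c \in C$. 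Then $y := -z$ is a nonzero vector satisfying $A^T y = 0$ and $y \geq_C 0$, which is alternative (2).

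The main obstacle I anticipate is bookkeeping for the cone conventions and signs at the separation step; the orthogonality-to-subspace deduction is routine, but one must be careful to pass correctly from strict separation of an open set to a weak inequality on its closure and to flip the sign of the separating functional so the conclusion $y \geq_C 0$ (rather than $y \leq_C 0$) is what emerges. Everything else reduces to the incompatibility computation, which is a one-line contradiction.
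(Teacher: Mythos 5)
Your proof is correct, but note that the paper itself offers no proof of this lemma at all: it is quoted verbatim as a known result from Bazaraa--Sherali--Shetty and used as a black box in the proof of Theorem 5. So there is no internal argument to compare against; what you have written is essentially the standard textbook proof, and it is sound. The ``not both'' computation $0=(A^Ty)^Tx=y^T(Ax)<0$ is the usual one-liner, and the converse direction via separating the range of $A$ from $-\mathrm{int}\,C$, deducing $z\perp\mathrm{range}(A)$ from boundedness of a linear functional on a subspace, and then passing from the open cone to its closure, is exactly how the result is proved in the cited source. One point worth flagging, which you handle implicitly but which the paper's statement leaves ambiguous: as written in the lemma, $Ax\in\mathbb{R}^n$ and $y\in\mathbb{R}^n$ are both compared to $0$ via the cone $C$, which the paper earlier fixed as a subset of $\mathbb{R}^m$; your decision to read $C$ as the nonnegative orthant of the appropriate dimension is the right reading of the Bazaraa version, but a reader wanting the general-cone form (as suggested by the paper's use of an arbitrary closed pointed convex cone elsewhere, and by the application with $\mu>_C0$ in Theorem 5) should note that alternative (2) would then naturally involve the dual cone $C^*$ rather than $C$ itself; your separation argument in fact delivers exactly that dual-cone version with no extra work.
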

	
	\begin{proof}[Proof of Theorem \ref{thm6}]
		Assume that for each
		$r> 0$, there is $x_0 \in B(\xi,r)$ satisfying
		\begin{align*}
		f(x_0)-f (\xi) < -e \|\eta(x_0,\xi)\|.
		\end{align*}
		Since $f$ is approximate pseudo $(\eta,e)$-invex of type I at $\xi$, we obtain
		$$A_{\xi} \eta(x_0,\xi) <_C 0, \quad \forall A_{\xi} \in \partial f(\xi).$$
		By applying Gordan's Theorem we deduce that there is no $\mu>_C0$ such that $\mu^T A_{\xi} = 0$ for all $A_{\xi} \in \partial f(\xi)$. We conclude $\xi$ is not a vector critical point of $f$.
	\end{proof}
	
\begin{Remark}
Theorem \ref{thm6} improves Lemma 3.1 in \cite{MishraUpadhyay2013Positivity} since the approximate pseudo convexity of type I has been
weakened by the approximate pseudo $(\eta,e)$-invexity of type I.
\end{Remark}

\section{Example}
In this section, we illustrate the obtained results by an example.

Consider the following vector optimization problem:

$$\min f(x) :=( f_1(x) , f_2(x) ),\quad \mbox{s.t.} \quad x \in X,$$
where,
$$
f_1(x)=\begin{cases}  -x^3-x^2+5x & x\geq 0 \\
x^3+6x & x<0;
\end{cases}
$$

and

$$
f_2(x)=\begin{cases}  x^2-2x & x\geq0 \\
-x^2-3x & x<0,
\end{cases}
$$

$X=\mathbb{R}$, $C=\mathbb{R}^2_+$ and $\eta(x,y)=x-y$ for all $x,y \in X$.

The Clarke subdifferential of $f$ at $x \in X$ is given by
$$\partial f(x)= \begin{cases}
\{(-3x^2-2x+5,2x-2)^T\} & x>0 \\
co\{(5;-2)^T,(6;-3)^T\}=\{(5k_1+6k_2;-2k_1-3k_2)^T\} & x=0 \\
\{(3x^2+6,-2x-3)^T\} & x<0
\end{cases}\ ,$$
where $k_1\geq0$, $k_2\geq0$ such that $k_1+k_2=1$.

For any $e=(e_1,e_2)$ s.t $0 < e_i<1$ with $i\in\{1,2\}$, we prove that there exists $r=\frac{1}{2} >0$ such that, for all $x,y \in B(x_0, r)$, $x_0=0$, one has
		$$f(x) - f(y) <_C 0  \quad \Rightarrow \quad A_y\eta(x,y) + e\|\eta(x,y)\| <_C 0,
		\; \forall A_y \in \partial f(y),$$
Hence, $f$ is approximate pseudo $(\eta,e)$-invex of type II at $x_0$.

On the other hand, let $\xi=0$.

Since for any $x\in X\setminus\{\xi\}$ and for all $k_1\geq0$, $k_2\geq0$ such that $k_1+k_2=1$ one has
$$ A_\xi\eta(x,\xi)= x(5k_1+6k_2;-2k_1-3k_2)^T\nleq_C 0\ ,$$

Therefore, $\xi=0$ solves (SVVI).\\

Now, since $f$ is approximate pseudo $(\eta,e)$-invex of type II at $\xi$, then, by Theorem \ref{thm3}, $\xi=0$
should be a local $(\eta,e)$-quasi efficient solution of (VOP). Indeed, for
$0 < e<1$,  we have for all $x\in B(\xi,r)\setminus\{\xi\}$ with $r> 0$
$$f(x)-f(\xi)+e\|\eta(x,\xi)\| =f(x)+e|x|=\begin{cases}  (-x^3-x^2+5x+ex,x^2-2x+ex)^T & x> 0 \\
(x^3+6x-ex,-x^2-3x-ex)^T & x<0
\end{cases} \ ,$$
which means that
$$f(x)-f(\xi)+e\|\eta(x,\xi)\| \nleq_C 0,$$
Therefore, $\xi$ is a local $(\eta,e)$-quasi efficient solution of (VOP).\\

\section{Conclusion}

We have considered two generalized types of quasi efficient solutions to nonsmooth vector optimization problems. Using new generalized invexity assumptions we have provided necessary and sufficient conditions of optimality for these solutions. More precisely, we have shown that Stampacchia vector variational inequalities present sufficient optimality conditions when the multiobjective function $f$ satisfies weak forms of approximate invexity. On the other hand, we have obtained necessary optimality conditions in terms of Minty vector variational inequalities given similar approximate invexity assumptions on the function $-f$. Finally, we have proven that vector critical points represent sufficient optimality conditions as well for approximate pseudo invex functions. It is worth mentioning that we have used the original definition of Clarke's generalized Jacobian instead of the Cartesian product of  Clarke's subdifferentials of the scalar-valued function components as done in~\cite{Mishra2006NA,GuptaMishra2018Optimization,MishraUpadhyay2013Positivity,Gut2016OL}. Since our generalized invexity contains as special cases previous generalized convexity conditions that were provided in~\cite{NgaiLucThera2000JNCA,BhatiaGupta2013OptimLett,noor2006JMAA}, the presented theorems in this paper extend many corresponding results in the literature like~\cite{MishraUpadhyay2013Positivity} for example.

\end{document}